\newtheorem{theorem}{Theorem}
\newtheorem{lemma}[theorem]{Lemma}
\newtheorem{prop}[theorem]{Proposition}
\theoremstyle{remark}
\theoremstyle{definition}
\newtheorem{remark}[theorem]{Remark}
\numberwithin{equation}{theorem}
\def\beq{\begin{equation}}
\def\eeq{\end{equation}}
\def\beqn{\begin{equation*}}
\def\eeqn{\end{equation*}}
\def\ben{\begin{enumerate}}
	\def\een{\end{enumerate}}
\def\crash#1{}
\def\A{{\mathbb A}}
\def\B{{\mathbb B}}
\def\N{{\mathbb N}}
\def\R{{\mathbb R}}
\def\Z{{\mathbb Z}}
\def\l{\left}
\def\r{\right}
\def\ie{\emph{i.e.}~}
\def\cO{{\mathcal O}}
\def\sF{{\mathscr F}}
\def\limpro{\mathop{\lim\limits_{\displaystyle\leftarrow}}}
\def\limind{\mathop{\lim\limits_{\displaystyle\rightarrow}}}
\def\bBorn{\mathbf{Born}}
\def\bLoc{\mathbf{Loc}}
\def\wotimes{\widehat{\otimes}}
\newcommand{\msp}[1]{\mathrm{Sp}\left(#1\right)}
\newcommand{\weak}[1]{\langle #1\rangle^\dagger}
\newcommand{\tate}[1]{\langle #1 \rangle}
\newcommand{\norm}[1]{\left\vert#1\right\vert}
\newcommand{\minus}{\scalebox{0.55}[1.0]{$-$}}
\title{Errata corrige to Theorems A and B for dagger quasi-Stein spaces}
\begin{document}
	

\author{Federico Bambozzi}
\address{ Dipartimento di Matematica ``Tullio Levi-Civita" \\ University of Padova \\
Via Trieste 63 \\ 35121 \\
Padova \\ Italy}
\email{federico.bambozzi@unipd.it}

\author{Christopher Lazda}
       \address{Department of Mathematics\\ Harrison Building \\ Streatham Campus
 \\ University of Exeter \\ North Park Road \\ Exeter  \\ EX4 4QF \\ United Kingdom }
       \email{c.d.lazda@exeter.ac.uk}

	\maketitle

In \cite{Bam} the first named author claimed a proof of the following dagger analogue of Cartan's famous Theorems A and B for complex analytic Stein spaces.

\begin{theorem} \label{thm:main}
Let $X$ be a dagger quasi-Stein space over a non-Archimedean field $k$ that can be embedded as a closed subspace
in a finite direct product of a finite number of open polydisks, affine lines, and dagger closed polydisks, and $\sF$ a coherent sheaf on $X$. Then:
\begin{enumerate} \item $\sF$ is generated by its global sections;
\item ${\rm H}^q(X,\sF)=0$ for all $q>0$.
\end{enumerate}
\end{theorem}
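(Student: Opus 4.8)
The plan is to reduce to the ambient space and then run the classical exhaustion argument, with all the genuinely new content concentrated in the behaviour of derived projective limits of dagger affinoid algebras. Let $i \colon X \hookrightarrow Y$ denote the given closed immersion, where $Y$ is a finite product of open polydisks, affine lines, and dagger closed polydisks. Since $i$ is a closed immersion, $i_*$ is exact on coherent sheaves, preserves coherence, and satisfies ${\rm H}^q(X,\sF)={\rm H}^q(Y,i_*\sF)$; moreover $\cO_Y \to i_*\cO_X$ is surjective, so generation of $i_*\sF$ by global sections on $Y$ yields generation of $\sF$ on $X$. Each factor of $Y$ is quasi-Stein and a finite product of quasi-Stein dagger spaces is again quasi-Stein, so it suffices to prove (1) and (2) for a coherent sheaf on $Y$. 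I would fix once and for all an exhaustion $Y=\bigcup_{n\ge 1} Y_n$ by dagger affinoid subdomains (products of closed dagger polydisks whose radii increase towards the boundary), chosen so that the restriction maps $\cO(Y_{n+1})\to\cO(Y_n)$ have dense image; on these explicit models this is the defining property of quasi-Steinness and is checked directly by truncating overconvergent power and Laurent series.

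On each dagger affinoid $Y_n$ I would invoke the dagger analogue of Kiehl's theorem: coherent sheaves correspond to finitely generated modules over the dagger affinoid algebra, such modules are generated by their global sections, and ${\rm H}^q(Y_n,\sF)=0$ for $q>0$ (overconvergent Tate acyclicity). To pass from the $Y_n$ to $Y$ I would use the exact sequence
$$0 \to {\varprojlim}^{1}_{n}\,{\rm H}^{q-1}(Y_n,\sF) \to {\rm H}^q(Y,\sF) \to \varprojlim_n {\rm H}^q(Y_n,\sF)\to 0,$$
arising from the recovery of $\sF$ from its restrictions to the exhausting family. The vanishing of ${\rm H}^q(Y_n,\sF)$ for $q>0$ collapses this to ${\rm H}^q(Y,\sF)\cong {\varprojlim}^{1}_{n}\,{\rm H}^{q-1}(Y_n,\sF)$, which is automatically zero for $q\ge 2$ and, for $q=1$, identifies ${\rm H}^1(Y,\sF)$ with ${\varprojlim}^1 M_n$, where $M_n := {\rm H}^0(Y_n,\sF)$.

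The crux---and, I expect, exactly the point where the original argument of \cite{Bam} must be repaired---is the vanishing of ${\varprojlim}^1 M_n$. The naive appeal to the classical Mittag-Leffler theorem (dense transition maps of Fréchet spaces have vanishing ${\varprojlim}^1$) is unavailable here, because a dagger affinoid algebra is an inductive limit of Banach algebras, i.e. an LB-space rather than a Fréchet space, and for LB-systems dense images alone do not force ${\varprojlim}^1=0$. I would therefore carry out the whole argument in the bornological, or ind-Banach, framework, where the projective limit must be derived, and prove a Mittag-Leffler-type vanishing adapted to these specific systems: one must show that the transition maps $M_{n+1}\to M_n$ are dense through a compatible system of Banach models (a strictness or nuclearity condition), propagating the density from $\cO(Y_{n+1})\to\cO(Y_n)$ to the finitely presented modules $M_n$ by choosing compatible finite presentations and invoking the open mapping theorem in the relevant category. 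This double-limit bookkeeping---inductive in the overconvergence radius, projective in the exhaustion---is the main obstacle, and controlling the interchange of these two limits is precisely what the correction has to get right.

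Finally, with the density and vanishing of the previous step in hand, assertion (1) follows in the usual way. For $x\in Y_n$, the finitely generated module $M_n$ generates $\sF|_{Y_n}$, so the map $M_n\to \sF_x/\fm_x\sF_x$ is surjective onto this finite-dimensional residue space; density of the image of ${\rm H}^0(Y,\sF)\to M_n$, together with the openness of the projection onto a finite-dimensional quotient, makes ${\rm H}^0(Y,\sF)\to \sF_x/\fm_x\sF_x$ surjective, and Nakayama's lemma then shows that global sections generate $\sF_x$. As $x$ was arbitrary this gives (1), completing the proof modulo the functional-analytic input isolated above.
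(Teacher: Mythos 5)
Your proposal cannot be completed, because the statement you were asked to prove is false --- and this paper is precisely the erratum establishing that. Theorem \ref{theo: main} shows that for $X=\B^-_k\times_k\B^+_k$, the product of the open unit disc with the dagger closed unit disc (a dagger quasi-Stein space embedded in a product of one open polydisk and one dagger closed polydisk, so squarely within the hypotheses of the statement), one has ${\rm H}^1(X,\cO_X)\neq 0$. You correctly isolated the one step that does not come for free, namely the vanishing of ${\varprojlim}^1_n M_n$, but you then left it as an unproven ``functional-analytic input'', asserting that a Mittag--Leffler-type argument propagating density through compatible Banach models should deliver it. That is exactly the step that cannot be repaired: density of the transition maps does hold for the natural exhaustion $U_n=\msp{k\weak{\eta_n^{-1}x,y}}$ of $\B^-_k\times_k\B^+_k$, but for projective systems of LB-spaces density does not imply ${\varprojlim}^1=0$, and here the vanishing genuinely fails. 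By the Baire-category criterion of Proposition 3 (adapted from Wengenroth), ${\varprojlim}^1_n\,k\weak{\eta_n^{-1}x,y}=0$ would force, for each $n$, the existence of $m\geq n$, $\eta>\eta_n$, $\lambda>1$ such that $k\weak{\eta_m^{-1}x,y}\subset k\weak{\eta_l^{-1}x,y}+k\tate{\eta^{-1}x,\lambda^{-1}y}$ for all $l\geq m$; Lemma \ref{counterexample} defeats every such choice using series $\sum_i a_i x^iy^{di}$ concentrated along a diagonal, whose convergence behaviour couples the open direction $x$ to the overconvergent direction $y$ in a way no single Banach model $k\tate{\eta^{-1}x,\lambda^{-1}y}$ can absorb.

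The underlying error in \cite{Bam} --- which your ``double-limit bookkeeping'' step would inevitably have to reproduce --- is Lemma 4.18 there: the claim that for nuclear $W$ the functor $W\wotimes_k(-)$ is exact in $\bLoc_k$. It fails to preserve cokernels when the input is a (even nuclear) Fr\'echet space, and equivalently $\cO(\B^+_k)\wotimes_k(-)$ fails to commute with the cofiltered limit over the exhaustion: the canonical map $\cO(\B^+_k)\wotimes_k\cO(\B^-_k)\to \limpro_{\eta<1}\cO(\B^+_k)\wotimes_k k\tate{\eta^{-1}x}$ is not an isomorphism, because the countable family of neighbourhoods of $\B^+_k\times_k\B^-_k$ in $\A^2_k$ encoded by the left-hand side is not cofinal among all neighbourhoods. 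The salvageable statement is Theorem \ref{thm:main_correct}: Theorems A and B do hold under the additional hypothesis $\limpro_{n}\cO_X(U_n)\simeq\mathbf{R}\!\limpro_{n}\cO_X(U_n)$ in $D(\bBorn_k)$ --- verified, for instance, for half-open annuli, where the open and dagger directions occupy disjoint variables --- but this hypothesis is exactly what fails for $\B^-_k\times_k\B^+_k$. Your reduction steps (pushforward along the closed immersion, dagger Kiehl/Tate acyclicity on each affinoid piece, the ${\varprojlim}^1$ exact sequence, and Nakayama for Theorem A) are all sound and match the architecture of the original argument; the theorem fails at, and only at, the step you flagged, and no completion of it exists.
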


We refer to \cite{Bam} for the terminology used in the statement of the theorem. In fact, the second of these (Theorem B) implies the first (Theorem A), and it was Theorem B for which a claimed proof was given in \cite{Bam}. Unfortunately, Theorem B is false for what is perhaps the main non-trivial example of a dagger quasi-Stein space: 

\begin{theorem} \label{theo: main} Suppose that $k$ is non-Archimedean, and let $X=\B_k^{-}\times_k \B^{+}_k$ denote the product of the (dagger) open unit disc with the (dagger) closed unit disc over $K$. Then ${\rm H}^1(X,\cO_X)\neq 0$. 
\end{theorem}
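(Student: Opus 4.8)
The plan is to compute $H^1(X,\cO_X)$ through the natural quasi-Stein exhaustion and to reduce the whole question to the non-vanishing of a single derived inverse limit. First I would fix an increasing sequence $r_n\nearrow 1$ and set $X_n=\B(0,r_n)\times_k\B^{+}_k$, a dagger affinoid, so that $X=\bigcup_n X_n$ is an admissible exhaustion by dagger affinoids. Since dagger affinoids are $\cO$-acyclic (Tate acyclicity), the standard spectral sequence for such an exhaustion degenerates into short exact sequences
\beq
0\to{\varprojlim_n}^{1}\,\cO(X_n)\to H^q(X,\cO_X)\to\varprojlim_n H^q(X_n,\cO_X)\to 0 ,
\eeq
whence $H^q(X,\cO_X)=0$ for $q\ge 2$ and, crucially, $H^1(X,\cO_X)\cong{\varprojlim_n}^{1}\,\cO(X_n)$. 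Everything is thus reduced to showing that this $\varprojlim^{1}$ is non-zero.

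Next I would make the system explicit. Writing $S,T$ for the coordinates on the two factors,
\beq
A_n:=\cO(X_n)=\Big\{\sum_{i,j\ge 0}a_{ij}S^iT^j:\ \exists\,r>r_n,\ \rho>1,\ \sup_{i,j}|a_{ij}|\,r^i\rho^j<\infty\Big\},
\eeq
the transition maps $A_{n+1}\hookrightarrow A_n$ are the injective restrictions, and $\varprojlim_n A_n=\cO(X)$. The decisive structural point is that each $A_n$ is \emph{not} a Fréchet space but an $LB$-space of compact type, $A_n=\varinjlim_m A_{n,m}$ with $A_{n,m}=\cO\big(\B(0,r_n+\tfrac1m)\times\B(0,1+\tfrac1m)\big)$ Banach. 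For a tower of Fréchet spaces with dense transition maps one has $\varprojlim^{1}=0$ by the topological Mittag--Leffler theorem, and this is precisely the step invoked in \cite{Bam}; it is invalid for towers of $LB$-spaces, and this is the source of the error.

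The heart of the matter is the failure of the inductive limits $\varinjlim_m$ to commute with the infinite product $\prod_n$ in the two-term complex $\prod_n A_n\xrightarrow{\,1-\mathrm{sh}\,}\prod_n A_n$ computing $\varprojlim^{1}_n A_n$. Let $P=\prod_n A_n$ and let $P^{\mathrm{unif}}=\varinjlim_m\prod_n A_{n,m}\subseteq P$ be the subgroup of sequences of uniformly bounded inner level, both stable under the shift. Applying the snake lemma to the endomorphism $1-\mathrm{sh}$ of $0\to P^{\mathrm{unif}}\to P\to Q\to 0$ and using ${\varprojlim_n}^{1}A_{n,m}=0$ for the Banach rows, I obtain
\beq
\cO(X)\big/\cO(X)^{\mathrm{unif}}\cong\ker\big(1-\mathrm{sh}\mid Q\big),\qquad
{\varprojlim_n}^{1}A_n\cong\mathrm{coker}\big(1-\mathrm{sh}\mid Q\big),
\eeq
where $\cO(X)^{\mathrm{unif}}=\bigcup_{\rho>1}\cO\big(\B^{-}\times\B(0,\rho)\big)$ is the space of functions overconvergent in $T$ with a \emph{single} radius $\rho>1$ valid over the whole open disc. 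The left isomorphism already exhibits the pathology in degree zero: the function $f=\tfrac{1}{1-ST}=\sum_{k\ge 0}(ST)^k$ lies in $\cO(X)$ --- at $S$-radius $r<1$ it converges in $T$ up to radius $1/r>1$ --- yet lies in no $\cO(\B^{-}\times\B(0,\rho))$, since that would force $\rho\le 1/r$ for all $r<1$, i.e. $\rho\le 1$. Hence $\cO(X)\neq\cO(X)^{\mathrm{unif}}$ and $\ker(1-\mathrm{sh}\mid Q)\neq 0$.

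It remains to pass from the non-vanishing of the kernel to that of the cokernel, i.e. to produce an actual class in $\varprojlim^{1}_n A_n$, and this is the step I expect to be the main obstacle. The plan is to build an explicit cocycle $(y_n)\in\prod_n A_n$ whose inner levels are forced to grow --- modelled on the dyadic blocks of $f$, with $y_n$ requiring $T$-overconvergence radius only $1+O(1-r_n)$ --- and to prove it is not a coboundary modulo $P^{\mathrm{unif}}$: any putative primitive $(x_n)$ together with a uniform level $m_0$ would, after restriction to a fixed polydisc and telescoping, manufacture a uniform overconvergence radius for $f$, contradicting the previous paragraph. The delicacy, which is exactly what is overlooked in \cite{Bam}, is that the naive constructions collapse --- in the projective ($S$) direction polynomials are dense, so any purely $S$-divergent cocycle can be repaired by truncation (consistent with $\B^{-}$ being Stein), while a single $\B^{+}$ carries no higher cohomology in the inductive ($T$) direction. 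The cocycle must therefore couple the two variables genuinely, just as $1/(1-ST)$ does, so that no truncation in $S$ can restore a uniform radius in $T$; making this quantitative, equivalently verifying that the projective spectrum $(A_n)$ fails the $PLB$-acyclicity criterion, is where the real work lies.
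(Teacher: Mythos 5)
Your reduction of the theorem to the non-vanishing of $\limpro^{(1)}_{n}\cO(X_n)$ is exactly the paper's first step (there carried out via \cite[Proposition 3.1]{GK00}), and your snake-lemma bookkeeping is correct as far as it goes. But it goes less far than it appears to: since $\mathrm{coker}(1-\mathrm{sh}\mid P^{\mathrm{unif}})=0$, the snake sequence shows $\mathrm{coker}(1-\mathrm{sh}\mid Q)\cong\mathrm{coker}(1-\mathrm{sh}\mid P)=\limpro^{(1)}_n A_n$ \emph{unconditionally}, so passing to the quotient $Q$ merely repackages the problem rather than advancing it, and the kernel and cokernel of $1-\mathrm{sh}$ on $Q$ are completely decoupled in that sequence. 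In particular your genuinely nice observation that $\ker(1-\mathrm{sh}\mid Q)\cong\cO(X)/\cO(X)^{\mathrm{unif}}\neq 0$, witnessed by $1/(1-ST)$, implies nothing about the cokernel: it is the degree-zero pathology (the paper's remark that $\cO(\B_k^+)\wotimes_k(\minus)$ fails to commute with the cofiltered limit), not a degree-one class. The crux --- producing a non-coboundary --- is exactly the step you defer, and your sketched plan would fail as stated: a putative primitive $(x_n)\in P$ carries no uniformity at all, and when you telescope, the terms $\pi_{j,n}(y_j)$ have $T$-overconvergence radii tending to $1$ as $j\to\infty$, so restriction to a fixed polydisc yields no single radius $\rho>1$ and hence no contradiction with $f\notin\cO(X)^{\mathrm{unif}}$. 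The missing idea is how to extract uniformity from the bare assumption $\limpro^{(1)}=0$.

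The paper supplies precisely this via a Baire-category argument adapted from Wengenroth (Proposition 3.2.6 of \cite{Wen03} --- exactly the $PLB$-acyclicity theory you gesture at in your last sentence): if $\Delta=1-\mathrm{sh}$ is surjective on $\prod_n V_n$, a Baire space for the product of the discrete topologies, then writing each LB-space $V_n$ as a countable union of $\cO_k$-submodules $A_{n,N}$ one extracts levels $N_n$ such that for all $n$ there is $m\geq n$ with $\pi_{m,n}(V_m)\subset\pi_{l,n}(V_l)+A_{n,N_n}$ for all $l\geq m$. For $V_n=k\weak{\eta_n^{-1}x,y}$ this becomes the concrete inclusion
\[ k\weak{\eta_m^{-1}x,y}\subset k\weak{\eta_{m+1}^{-1}x,y}+k\tate{\eta^{-1}x,\lambda^{-1}y}, \]
which Lemma \ref{counterexample} refutes by a quantitative version of your coupled-variables intuition: a series $\sum_i a_ix^iy^{di}$ whose coefficients satisfy $\norm{a_i}\rho^i\to 0$ at exactly one critical radius $\rho$ with $\eta_m<\rho<\min\{\eta_{m+1},\eta\lambda^d\}$, so that it lies in $k\weak{\eta_m^{-1}x,y}$ but in neither summand's reach. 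So your setup, your diagnosis of where the difficulty sits, and your insistence that the counterexample must couple $x$ and $y$ are all vindicated, but the proof is incomplete at its crux and needs this uniformization step to close. (A side-correction: the error in \cite{Bam} is not a misapplied Mittag--Leffler theorem for LB-towers, but the claimed exactness of $W\wotimes_k(\minus)$ in $\bLoc_k$ in Lemma 4.18.)
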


To prove this, fix an increasing sequence of elements $\eta_n\in \sqrt{\norm{k^\times}}\subset \R_{>0}$ in the divisible closure of the value group of $k$, tending towards $1$ from below, and then set $U_n=\msp{k\weak{\eta_n^{-1}x,y}}$. Thus each $U_n$ is a Weierstrass domain in $U_{n+1}$, and $X=\bigcup_n U_n$. Since each $U_n$ is (dagger) affinoid, it follows from \cite[Proposition 3.1]{GK00} that
\[ \mathbf{R}\Gamma(X,\cO_X) \simeq \mathbf{R}\! \limpro_{n \in \N}\mathbf{R}\Gamma(U_n,\cO_{U_n})\simeq \mathbf{R}\!\limpro_{n\in \N}k\weak{\eta_n^{-1}x,y}, \]
and we will show that $\limpro_{n\in \N}^{(1)}\,k\weak{\eta_n^{-1}x,y}\neq 0$ by using a criterion from \cite{Wen03}. 

To state this criterion, let $\{V_n,\pi_{m,n}\colon V_m\to V_n \}_{n\in \N,m\geq n}$ be an $\N$-indexed inverse system of vector spaces over $k$. For locally convex vector spaces over Archimedean fields, the following appears as a consequence of \cite[Proposition 3.2.6]{Wen03}, and the proof here is copied more or less word-for-word from there.

\begin{prop} 
Assume that each $V_n$ is a countable union of $\cO_k$-submodules $A_{n,N}\subset V_n$. If $\limpro_{n\in \N}^{(1)}\,V_n =0$, then there exists a sequence $N_n\in \N^{\N}$, such that for all $n\in \N$ there exists some $m\geq n$, such that for all $l\geq m$, $\pi_{m,n}(V_m)\subset \pi_{l,n}(V_l)+A_{n,N_n}$.
\end{prop}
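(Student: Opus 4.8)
The plan is to characterize the hypothesis $\limpro_{n\in\N}^{(1)} V_n = 0$ as the surjectivity of the difference map
\[ d\colon \prod_{n} V_n \to \prod_{n} V_n, \qquad (v_n)_n \mapsto (v_n - \pi_{n+1,n}(v_{n+1}))_n, \]
whose cokernel computes $\limpro^{(1)}$. Since the asserted containment at index $n$ involves only $A_{n,N_n}$, the conclusion is equivalent to the statement that for every $n$ there is some $N$ with the property $P(n,N)$: there exists $m \geq n$ such that $\pi_{m,n}(V_m) \subseteq \pi_{l,n}(V_l)+A_{n,N}$ for all $l \geq m$. I would therefore fix $n$ and argue by contradiction from the negation
\[ (\ast)\colon\quad \forall N\ \forall m \geq n\ \exists l \geq m:\ \pi_{m,n}(V_m) \not\subseteq \pi_{l,n}(V_l)+A_{n,N}, \]
producing from $(\ast)$ a target $(w_j)_j$ lying outside $\im d$ and so contradicting surjectivity. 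Throughout I abbreviate $W_l := \pi_{l,n}(V_l)$, a decreasing chain of $\cO_k$-submodules of $V_n$, and I may assume the $A_{n,N}$ are increasing in $N$ (replacing them by their partial sums, which are again $\cO_k$-submodules).

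The next step is a telescoping identity. Applying $\pi_{j,n}$ to $w_j = v_j - \pi_{j+1,j}(v_{j+1})$ and summing over $n \leq j < l$ shows that any preimage $(v_j)_j$ of $(w_j)_j$ satisfies $v_n - s_l = \pi_{l,n}(v_l) \in W_l$, where $s_l := \sum_{j=n}^{l-1}\pi_{j,n}(w_j)$. Since $v_n \in V_n = \bigcup_N A_{n,N}$, this forces $s_l \in A_{n,N}+W_l$ for all $l$, for the single index $N$ with $v_n \in A_{n,N}$. Consequently, to defeat surjectivity it suffices to prescribe the partial sums $s_l$, equivalently the increments $\delta_l := s_{l+1}-s_l = \pi_{l,n}(w_l) \in W_l$ (which may be taken to be any prescribed element of $W_l$, since $W_l = \pi_{l,n}(V_l)$), so that for every $N$ there is some index $l$ with $s_l \notin A_{n,N}+W_l$.

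The heart of the argument is then a sliding-hump recursion producing indices $n = p_0 < p_1 < \cdots$ together with values $c_t = s_{p_t}$. At stage $t$, writing $c_{t-1} \in A_{n,M}$ for some $M$ and setting $L_t := \max(t,M)$, I would apply $(\ast)$ with $m = p_{t-1}$ and $N = L_t$ to obtain $p_t > p_{t-1}$ and an element $z \in W_{p_{t-1}}$ with $z \notin W_{p_t}+A_{n,L_t}$; placing a single hump $\delta_{p_{t-1}} := z$ and zero increments in between yields $c_t = c_{t-1}+z$. Using that $A_{n,L_t}$ is a submodule containing $c_{t-1}$, one checks $c_t \notin A_{n,L_t}+W_{p_t}$, so the partial sum escapes level $L_t$. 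As $L_t \to \infty$ and the $A_{n,N}$ increase, every level $N$ is escaped at some $p_t$, contradicting the necessary condition above, hence refuting $(\ast)$; as $n$ was arbitrary this gives the sequence $(N_n)_n$.

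The step I expect to be the main obstacle is precisely the bookkeeping in this recursion: the increments are constrained to lie in the shrinking submodules $W_l$, whereas the escape furnished by $(\ast)$ is stated for the fixed-index submodules $A_{n,N}$, so one must arrange that the accumulated partial sum $c_{t-1}$ is absorbed into the submodule $A_{n,L_t}$ used at the next escape. This is exactly what dictates choosing $L_t$ large enough to contain $c_{t-1}$, and it is the compatibility of this choice with the monotonicity of the $A_{n,N}$ and the requirement $L_t \to \infty$ that makes the construction close up; the remaining verifications (the telescoping identity and the final contradiction) are routine.
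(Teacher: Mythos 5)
Your proposal is correct, but it takes a genuinely different route from the paper's. The paper follows Wengenroth \cite[Proposition 3.2.6]{Wen03}: it topologizes $W=\prod_n V_n$ as a countable product of discrete spaces, applies the Baire category theorem (iteratively) to the surjection $\Delta$ to produce a sequence $(N_j)_j$ for which each closure $\overline{B}_n$ of $B_n=\Delta\bigl(\prod_{j\leq n}A_{j,N_j}\times\prod_{j>n}V_j\bigr)$ has non-empty interior, upgrades this via the submodule property to the containment $\prod_{j<m}\{0\}\times\prod_{j\geq m}V_j\subset \overline{B}_n$, and then extracts the stated inclusion for an arbitrary $v_m\in V_m$ by exactly the telescoping identity you use. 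You instead argue contrapositively and purely algebraically: negating the per-$n$ conclusion, your sliding-hump recursion builds a single target $(w_j)_j$ whose partial sums $s_{p_t}$ escape $A_{n,L_t}+W_{p_t}$ with $L_t\to\infty$, so no $\Delta$-preimage can exist, contradicting surjectivity. I checked the recursion and it closes up: $(\ast)$ forces $l>m$ automatically (the non-inclusion is impossible for $l=m$), the increment $\delta_{p_{t-1}}=z\in W_{p_{t-1}}$ is realizable since $W_j=\pi_{j,n}(V_j)$, and the absorption $c_{t-1}\in A_{n,L_t}$ makes the escape step $c_t\notin A_{n,L_t}+W_{p_t}$ valid. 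The two proofs share the two-term complex computing $\mathbf{R}\!\limpro_n V_n$ and the telescoping computation, but yours uses it in the opposite direction (to build a non-liftable target rather than to decompose a lift), dispenses entirely with topology --- no product topology, no completeness, no Baire --- and in fact only needs the $A_{n,N}$ to be subgroups; it also produces each $N_n$ independently, which suffices since, as you observe, the conclusion is a conjunction of independent per-$n$ statements.

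One caveat worth recording: your ``WLOG increasing'' step replaces $A_{n,N}$ by the partial sums $A_{n,1}+\cdots+A_{n,N}$, so what you literally prove is the proposition with these finite sums in place of the original submodules. When the given family is not directed this is formally weaker (a finite sum of the $A_{n,N}$ need not be contained in any single $A_{n,N'}$), whereas the paper's Baire argument delivers the inclusion with a single original $A_{n,N_n}$, since the lift $w$ there has $w_n\in A_{n,N_n}$ on the nose. For the use made of the proposition this is immaterial: it is applied to $V_n=\limind_{N} V_{n,N}$ with injective transition maps, where the family is already nested, so your proof fully supports the paper's application; you should simply flag the restriction (or state your version for nested families) rather than present the replacement as a costless reduction.
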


\begin{proof} The derived limit $\mathbf{R}\!\limpro_{n} V_n$ is computed by the formula
\[ \mathbf{R}\!\limpro_{n} V_n \simeq \left[ \prod_n V_n \overset{\Delta}{\longrightarrow} \prod_n V_n \right] \]
where $\Delta(v_n)=(v_n-\pi_{n+1,n}(v_{n+1}))$. Thus the hypothesis that $\limpro_{n}^{(1)}\,V_n=0$ is equivalent to saying that $\Delta$ is surjective. Let $W$ denote $\prod_n V_n$ equipped with the product topology from the discrete topology on each $V_n$. Thus
\[ W = \Delta\left (\bigcup_N A_{0,N} \times \prod_{n\geq 1} V_n \right)=\bigcup_N \;\Delta\left(A_{0,N}\times \prod_{n\geq 1} V_n \right).\]
Since $W$ is a product of complete metric spaces, it is a Baire space, and it therefore follows that there exists $N_0$ such that the closure of 
\[ \Delta\left(A_{0,N_0}\times \prod_{n\geq 1} V_n\right) \]
has non-empty interior. Proceeding inductively, there exists a sequence $N_j$ such that for all $n$, the closure of 
\[ B_n:=\Delta\left(\prod_{j=0}^n A_{j,N_j}\times \prod_{j> n} V_j\right) \]
has non-empty interior. In particular, since $\overline{B}_n$ contains a translate of a vector subspace of $W$ of the form $\prod_{j<m} \{0\}\times \prod_{j\geq m} V_j$, and is a sub-$\cO_k$-module of $W$, it follows that $\overline{B}_n$ actually contains this vector subspace itself (and not just a translate). Increasing $m$, we might as well assume that $m\geq n$.

Now, $\overline{B}_n$ is contained in
\[  \bigcap_{l\geq m} \left( B_n + \prod_{j<l} \{0\} \times \prod_{j\geq l} V_j\right), \]
so it follows that for all $n$, there exists $m\geq n$ such that for all $l\geq m$, 
\[\prod_{j<m} \{0\}\times \prod_{j\geq m} V_j \subset \Delta\left(\prod_{j=0}^n A_{j,N_j}\times \prod_{j> n} V_j\right) +   \prod_{j<l} \{0\} \times \prod_{j\geq l} V_j.  \]
The claim is now that for any such $l \geq m\geq n$, the inclusion
\[  \pi_{m,n}(V_m)\subset \pi_{l,n}(V_l)+A_{n,N_n} \]
holds. This is clear if $l=m$, so we may as well assume that $l>m$. In this case, let $v_m\in V_m$, and consider $v=(0,\ldots,0,v_m,0,\ldots)\in W$. Then there exists $w=(w_j) \in \prod_{j=0}^n A_{j,N_j}\times \prod_{j> n} V_j$ such that
\[ v-\Delta(w) \in \prod_{j<l} \{0\} \times \prod_{j\geq l} V_j.\]
It follows that $w_j=\pi_{j+1,j}(w_{j+1})$ for all $j<l$, $j\neq m$, and that $v_m=w_m-\pi_{m+1,m}(w_{m+1})$. Therefore
\begin{align*} \pi_{m,n}(v_m) &=  \pi_{m,n}(w_m)-\pi_{m+1,n}(w_{m+1}) \\
&= \pi_{m-1,n}(w_{m-1})-\pi_{m+2,n}(w_{m+2}) \\
& \;\;\vdots \\
&= w_n - \pi_{l,n}(w_l).
\end{align*}
Clearly $\pi_{l,n}(w_l)\in \pi_{l,n}(V_{l})$, and by assumption $w_n\in A_{n,N_n}$, thus
\[ \pi_{m,n}(v_m) \in \pi_{l,n}(V_l)+A_{n,N_n} \]
as required.
\end{proof}

In particular, if $V_n=\limind_{N\in \N} V_{n,N}$ is a countable direct limit of vector subspaces $V_{n,N}$, with injective transition maps, and $\limpro_{n\in\N}^{(1)}\,V_n = 0$, then the following condition is satisfied:
\vspace{2mm}
\begin{itemize}\item for all $n$ there exist $N\in \N$ and $m\geq n$, such that for all $l\geq m$, $\pi_{m,n}(V_m)\subset \pi_{l,n}(V_l)+V_{n,N}$. 

\end{itemize}
\vspace{2mm}
We will apply this to $V_n= k\weak{\eta_n^{-1}x,y}$, which is, essentially by definition, a countable direct limit of certain natural subspaces. Concretely, 
\[ V_n = \limind_{\eta_n<\eta,1<\lambda}k\tate{\eta^{-1}x,\lambda^{-1}y},  \]
and the index set here has a countable cofinal subset. We deduce that if $\limpro_{n\in\N}^{(1)}\,k\weak{\eta_n^{-1}x,y}$ vanishes, then the following condition is satisfied:
\vspace{2mm}
\begin{itemize}\item for all $n$ there exist $m\geq n$, $\lambda>1$, and $\eta>\eta_n$, such that for all $l\geq m$, 
\[  k\weak{\eta_m^{-1}x,y} \subset k\weak{\eta_l^{-1}x,y} +  k\tate{\eta^{-1}x,\lambda^{-1}y}. \]
\end{itemize}
\vspace{2mm}
It is easy to show that this condition is not met, thus completing the proof of Theorem \ref{theo: main}.

\begin{lemma} \label{counterexample}
Suppose that $n\in \N$, $m\geq n$, $\lambda>1$, and $\eta>\eta_n$. Then
\[  k\weak{\eta_m^{-1}x,y} \not\subset k\weak{\eta_{m+1}^{-1}x,y} +  k\tate{\eta^{-1}x,\lambda^{-1}y}. \]
\end{lemma}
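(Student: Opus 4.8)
The plan is to translate the non-containment into an elementary growth estimate on Taylor coefficients. Write a general element as $f=\sum_{i,j\ge 0}a_{ij}x^iy^j$, and recall that $f\in k\weak{\rho^{-1}x,\sigma^{-1}y}$ if and only if $\sup_{i,j}\norm{a_{ij}}(\rho')^i(\sigma')^j<\infty$ for some $\rho'>\rho$, $\sigma'>\sigma$, while $f\in k\tate{\eta^{-1}x,\lambda^{-1}y}$ if and only if $\norm{a_{ij}}\eta^i\lambda^j\to 0$. The first step is to extract a necessary condition for membership in the sum: if $f=g+h$ with $g=\sum b_{ij}x^iy^j\in k\weak{\eta_{m+1}^{-1}x,y}$ and $h=\sum c_{ij}x^iy^j\in k\tate{\eta^{-1}x,\lambda^{-1}y}$, then the ultrametric inequality gives $\norm{a_{ij}}\le\max(\norm{b_{ij}},\norm{c_{ij}})$ for every $(i,j)$, so there are $\eta''>\eta_{m+1}$, $\lambda''>1$ and constants $C',C''>0$ with
\[ \norm{a_{ij}}\le\max\!\big(C'(\eta'')^{-i}(\lambda'')^{-j},\ C''\eta^{-i}\lambda^{-j}\big)\quad\text{for all }(i,j). \]
Call this condition $(\star)$. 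It therefore suffices to construct $f\in k\weak{\eta_m^{-1}x,y}$ for which $(\star)$ fails for every admissible $\eta'',\lambda'',C',C''$.

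To build such an $f$, I would fix $\eta'\in(\eta_m,\eta_{m+1})$ and $\lambda'\in(1,\lambda)$, and choose each $a_{ij}\in k^\times$ with $\norm{a_{ij}}$ within a fixed multiplicative constant of $(\eta')^{-i}(\lambda')^{-j}$ (possible since $k$ is nontrivially valued). Then $\sup_{i,j}\norm{a_{ij}}(\eta')^i(\lambda')^j<\infty$, so $f\in k\tate{(\eta')^{-1}x,(\lambda')^{-1}y}\subset k\weak{\eta_m^{-1}x,y}$ because $\eta'>\eta_m$ and $\lambda'>1$. To defeat $(\star)$ I would test it along the lattice points closest to a ray $j=\kappa i$: up to bounded factors, along this ray
\[ \norm{a_{ij}}(\eta'')^i(\lambda'')^j\asymp\big((\eta''/\eta')(\lambda''/\lambda')^{\kappa}\big)^{i},\qquad \norm{a_{ij}}\eta^i\lambda^j\asymp\big((\eta/\eta')(\lambda/\lambda')^{\kappa}\big)^{i}. \]
Both rates exceed $1$ — forcing both sides of $(\star)$ to be violated for $i$ large, for every admissible $\eta'',\lambda'',C',C''$ — as soon as $\kappa$ satisfies $(\eta/\eta')(\lambda/\lambda')^{\kappa}>1$ and $(\eta_{m+1}/\eta')(\lambda')^{-\kappa}>1$, the latter being the worst case $\eta''\downarrow\eta_{m+1}$, $\lambda''\downarrow 1$.

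The heart of the matter is to check that this interval for $\kappa$ is non-empty. Taking logarithms, I need
\[ \frac{\log(\eta'/\eta)}{\log(\lambda/\lambda')}<\kappa<\frac{\log(\eta_{m+1}/\eta')}{\log\lambda'}, \]
where the lower bound is vacuous (any $\kappa>0$ works) when $\eta\ge\eta'$. Since the left-hand fraction stays bounded while $\log\lambda'\to 0^+$ as $\lambda'\downarrow 1$, the right-hand fraction tends to $+\infty$; equivalently $\log(\eta'/\eta)\log\lambda'\to 0$ whereas $\log(\eta_{m+1}/\eta')\log(\lambda/\lambda')\to\log(\eta_{m+1}/\eta')\log\lambda>0$. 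Hence for $\lambda'$ chosen close enough to $1$ the interval is non-empty, and any admissible slope $\kappa$ (with $j=\lfloor\kappa i\rfloor$) yields the required $f$.

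I expect the main obstacle to be precisely the regime $\eta\le\eta_m$: there $h$ may converge arbitrarily far in the $x$-direction, so the pure $x$-axis coefficients ($j=0$) carry no obstruction at all, and one is forced to use a genuinely two-dimensional ray with slope $\kappa>0$ balancing the gain in the $y$-direction (where $h$ is constrained by $\lambda$) against the loss in the $x$-direction. Making the two exponential rates simultaneously positive is exactly the non-emptiness of the $\kappa$-interval above, and this is what the choice $\lambda'\downarrow 1$ secures; the remaining bookkeeping (integrality of $j=\lfloor\kappa i\rfloor$ and the bounded factors coming from the value group of $k$) does not affect the exponential blow-up.
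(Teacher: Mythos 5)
Your proof is correct, and it rests on the same underlying idea as the paper's: defeat any decomposition by a coefficientwise ultrametric estimate along a ray $j\approx\kappa i$, chosen so that the series converges for radii just beyond $(\eta_m,1)$ while the monomials $x^iy^{\kappa i}$ grow against \emph{both} summands. The executions differ in two ways worth noting. First, where you take geometric coefficients $\norm{a_{ij}}\asymp(\eta')^{-i}(\lambda')^{-j}$ and then must quantify over all admissible $(\eta'',\lambda'',C',C'')$ coming from a decomposition — handled via your worst-case limits $\eta''\downarrow\eta_{m+1}$, $\lambda''\downarrow 1$ and the non-emptiness of the $\kappa$-interval as $\lambda'\downarrow 1$ — the paper sidesteps this quantification entirely: it uses the inclusion $k\weak{\eta_{m+1}^{-1}x,y}\subset k\tate{\eta_{m+1}^{-1}x,y}$ into the \emph{closed} Tate algebra, and takes a critically decaying sequence ($\norm{a_i}\rho^i\to 0$ but $\norm{a_i}\rho'^i\not\to 0$ for every $\rho'>\rho$) supported on a single integer-slope ray $j=di$, with $d$ chosen so that $\eta\lambda^d>\eta_m$ and $\eta_m<\rho<\min\{\eta_{m+1},\eta\lambda^d\}$; a single non-convergence statement then kills all decompositions at once, with no limiting argument in the parameters. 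Your interval condition $\log(\eta'/\eta)/\log(\lambda/\lambda')<\kappa<\log(\eta_{m+1}/\eta')/\log\lambda'$ encodes exactly the same balancing act as the paper's inequalities $\eta\lambda^d>\eta_m$ and $\rho<\min\{\eta_{m+1},\eta\lambda^d\}$, so the two arguments are equivalent in substance; the paper's is shorter, yours makes the quantifier structure fully explicit. One cosmetic slip: from $\sup_{i,j}\norm{a_{ij}}(\eta')^i(\lambda')^j<\infty$ you assert $f\in k\tate{(\eta')^{-1}x,(\lambda')^{-1}y}$, but membership in the Tate algebra requires the coefficients to tend to zero, not merely be bounded, and with $\norm{a_{ij}}$ comparable to $(\eta')^{-i}(\lambda')^{-j}$ this fails at the radii $(\eta',\lambda')$ themselves. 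This is harmless — by your own stated criterion, boundedness at $(\eta',\lambda')$ with $\eta'>\eta_m$, $\lambda'>1$ already gives $f\in k\weak{\eta_m^{-1}x,y}$, since boundedness at these radii implies convergence at any intermediate radii — but the intermediate Tate-algebra claim should be dropped or replaced by membership in $k\tate{\rho^{-1}x,\sigma^{-1}y}$ for $\eta_m<\rho<\eta'$, $1<\sigma<\lambda'$.
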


\begin{proof} Choose $d>0$ such that $\eta\lambda^d>\eta_m$, choose $\rho$ such that $\min\{\eta_{m+1},\eta\lambda^d\}>\rho>\eta_m$, and choose a sequence $a_i\in k$ such that $\norm{a_i}\rho^i\rightarrow 0$ but $\norm{a_i}\rho'^i\not\rightarrow 0$ for all $\rho'>\rho$. These choices can clearly be made. If we also choose $\delta>1$ and $\eta'>\eta_m$ such that $\eta'\delta^d=\rho$, then the function $\sum_i a_i x^iy^{di}$ lies in $k\tate{\eta'^{-1}x,\delta^{-1}y}$, and hence in $k\weak{\eta_m^{-1}x,y}$, but it cannot lie in
\[ k\weak{\eta_{m+1}^{-1}x,y} +  k\tate{\eta^{-1}x,\lambda^{-1}y}\subset k\tate{\eta_{m+1}^{-1}x,y}+ k\tate{\eta^{-1}x,\lambda^{-1}y} \]
because $\norm{a_i}\min\{\eta_{m+1},\eta\lambda^d\}^i \not\rightarrow 0$.
\end{proof}

\begin{remark} Considering instead de\thinspace Rham cohomology, it is easy to see via direct calculation that the complexes $\Gamma(U_n,\Omega^\bullet_{U_n/k})$, as well as $\Gamma(X,\Omega^\bullet_{X/k})$, are all quasi-isomorphic to $k$ concentrated in degree $0$. Thus
\[ {\bf R}\Gamma(X,\Omega^\bullet_{X/k}) =  \Gamma(X,\Omega^\bullet_{X/k}),  \]
so we recover a version of Theorem B in de\thinspace Rham cohomology. We do not know whether or not to expect this in general for a vector bundle with integrable connection on a quasi-Stein dagger space, however, the existence of overconvergent isocrystals with infinite dimensional de\thinspace Rham cohomology \cite[\S4.2]{LS07} makes it seem unlikely.  
\end{remark}

\begin{remark}
One might be tempted to try to recover a version of Theorem B by considering $\cO_X$ as a sheaf valued in the category ${\bf Mod}_{k_{\blacksquare}}$ of solid $k$-vector spaces in the sense of \cite{CS19}. In other words, we could try to compute $\mathbf{R}\!\limpro_{n\in \N} k\weak{\eta_n^{-1}x,y}$ in ${\bf Mod}_{k_{\blacksquare}}$ rather than in the category ${\bf Mod}_k$ of abstract $k$-vector spaces. However, since products in ${\bf Mod}_{k_{\blacksquare}}$ are exact, it follows that the derived limit of an inverse system $\{V_n\}_{n\in \N}$ of solid $k$-vector spaces is computed via the same formula
\[ \mathbf{R}\!\limpro_{n\in \N} V_n\simeq \left[ \prod_n V_n\overset{\Delta}{\longrightarrow} \prod_n V_n \right] \]
as that of an inverse system of abstract $k$-vector spaces. Since the functor  
\[ {\bf Mod}_{k_{\blacksquare}} \to {\bf Mod}_k \]
taking the underlying vector space is exact and preserves products, we deduce that it commutes with taking $\limpro_{n}^{(1)}$ of an $\N$-indexed inverse systems of solid $k$-vector spaces. In particular, since ${\rm H}^1(X,\cO_X)\neq 0$ as an abstract $k$-vector space, it follows that ${\rm H}^1(X,\cO_X)\neq 0$ as a solid $k$-vector space. 
\end{remark}

In the rest of this note, we explain the mistake in proof of Theorem \ref{thm:main}, and give some non-trivial (that is, non-Stein) examples of dagger quasi-Stein spaces for which Theorem B still holds. The key point is that Lemma 4.18 of \cite{Bam} does not always hold. Indeed, this lemma claims the following: let $W$ be a nuclear LB-space and $\{ V_n \}_{n \in \N}$ be a projective system of nuclear LB-spaces such that $\limpro_{n \in \N} V_n$ is a nuclear Fr\'echet space and $\limpro_{n \in \N} V_n \simeq \limpro_{n \in \N} V_n'$, where $\limpro_{n \in \N} V_n'$ is an epimorphic system of Banach spaces. Then
\[ \mathbf{R} \!\limpro_{n \in \N} W \wotimes_k V_n \simeq \limpro_{n \in \N} W \wotimes_k V_n \simeq W \wotimes_k  \limpro_{n \in \N} V_n  \]
holds in $D(\bLoc_k)$ and $D(\bBorn_k)$, where $\bLoc_k$ and $\bBorn_k$ are the categories of locally convex spaces and bornological spaces over $k$ respectively. The reader unfamiliar with the terminology of the lemma is referred to \cite{Bam}. The isomorphisms are claimed to be given by the canonical morphisms between the objects. 
The problem in the proof of the lemma is that it crucially depends on the claim that the functor $W \wotimes_k (\minus)$ is exact. But in Lemma 4.18 it is only checked that it preserves monomorphisms and strict monomorphisms, giving for granted that it is right exact. More precisely, on the third line of page 728, it is claimed that "Since $W$ is nuclear, the functor $W \wotimes_k (\minus)$ is exact [in $\bBorn_k$ and in $\bLoc_k$] ..". The exactness claim in the category $\bBorn_k$ is true, but in $\bLoc_k$ it is not true because the completed tensor product may not preserve cokernels. One example when this happens is when the input of $W \wotimes_k (\minus)$ is a (even nuclear) Fr\'echet space, like in the case of the lemma. The exactness in $\bLoc_k$ is crucial for the proof of the lemma, which uses the fact that completed tensor products commute with cofiltered limits in ${\bLoc}_k$. Since this fails in ${\bBorn}_k$, one cannot use the exactness of $W \wotimes_k (\minus)$ in ${\bBorn}_k$ to correct the proof of the lemma. 

Indeed, we can interepret the above calculation that ${\rm H}^1(X,\cO_X)\neq 0$ when $X=\B^-_k\times_k \B^+_k$ as a failure of $W \wotimes_k (\minus)$  to commute with cofiltered limits in ${\bBorn}_k$. If we write 
\[ \cO(\B_k^-) = \limpro_{\rho < 1} k \tate{ \eta^{-1} x }  \]
for the Fr\'echet algebra of analytic functions on the open unit disk, and
\[ \cO(\B_k^+) = \limind_{\lambda > 1} k \tate{ \lambda^{-1} y } \]
for the LB-algebra of overconvergent analytic functions on the closed unit disk, then the canonical map of bornological spaces
\[ \cO(\B_k^+) \wotimes_k \cO(\B_k^-) \to \limpro_{\eta < 1} \cO(\B_k^+) \wotimes_k k \tate{ \eta^{-1} x } \]
is \emph{not} an isomorphism. The algebra on the right-hand side is the algebra of analytic functions on the direct product of the open disk with the dagger closed disk $\B_k^+ \times \B_k^-$, whereas the algebra on the left-hand side can be written as 
\[ \cO(\B_k^+) \wotimes_k \cO(\B_k^-) \simeq \limind_{\lambda > 1} ( k \tate{ \lambda^{-1} y } \wotimes_k \cO(\B_k^-) ). \] 
This corresponds to the direct limit of the algebras of analytic functions on a certain countable family of neighborhoods of $\B_k^+ \times_k \B_k^-$ inside $\A_k^2$. If this family of neighborhoods were to form a cofinal system of neighborhoods of $\B_k^+ \times_k \B_k^-$ in $\A_k^2$ then the two algebras would agree. Lemma \ref{counterexample} is essentially based on the fact that this is not the case.


Besides this serious problem, the proof of Theorem \ref{thm:main} is correct for dagger quasi-Stein spaces for which the statement of Lemma 4.18 holds.

\begin{theorem} \label{thm:main_correct}
Let $X = \bigcup_{n \in \N} U_n$ be a dagger quasi-Stein space, written as a union of dagger affinoid subspaces. Suppose that
\[ \limpro_{n \in \N} \cO_X(U_n) \simeq \mathbf{R}\! \limpro_{n \in \N} \cO_X(U_n) \]
in $D(\bBorn_k)$. Then, every coherent sheaf on $X$ satisfies Theorems A and B.
\end{theorem}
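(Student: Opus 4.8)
The plan is to reduce Theorems A and B to a single vanishing statement about the inverse system of sections over the exhausting affinoids, and then to transport the hypothesis from the structure sheaf to an arbitrary coherent sheaf. Write $A_n=\cO_X(U_n)$ and $M_n=\sF(U_n)$. Since each $U_n$ is dagger affinoid, $M_n$ is a finitely generated $A_n$-module and $\mathbf{R}\Gamma(U_n,\sF)\simeq M_n$ is concentrated in degree $0$ (the dagger analogue of Kiehl's theorem); together with \cite[Proposition 3.1]{GK00} this yields $\mathbf{R}\Gamma(X,\sF)\simeq \mathbf{R}\!\limpro_{n}M_n$ in $D(\bBorn_k)$. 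As in the proof of the Proposition above, $\mathbf{R}\!\limpro_{n}$ of an $\N$-indexed system is computed by the two-term complex $\prod_n M_n\overset{\Delta}{\to}\prod_n M_n$ and so has cohomological amplitude $[0,1]$. Hence $H^q(X,\sF)=0$ for $q\ge 2$ automatically, $H^0(X,\sF)=\Gamma(X,\sF)$, and Theorem B reduces to the single assertion $\limpro_{n\in\N}^{(1)}M_n=0$; Theorem A will then follow from Theorem B by the standard reduction familiar from the complex- and rigid-analytic settings.

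The task is thus to propagate the hypothesis $\limpro_{n\in\N}^{(1)}A_n=0$ (which follows from $\mathbf{R}\!\limpro A_n\simeq\limpro A_n$, and which persists for finite free systems $\{A_n^{b}\}$ since finite products commute with $\mathbf{R}\!\limpro$) to the module systems $\{M_n\}$. I would exploit two structural features of the exhaustion: the restriction maps $A_{n+1}\to A_n$ are flat, each $U_n$ being a Weierstrass subdomain of $U_{n+1}$, and $M_n\simeq M_{n+1}\wotimes_{A_{n+1}}A_n$ by coherence. The strategy is to cover $\{M_n\}$ by a free system, that is, to build a strict short exact sequence of inverse systems $0\to\{K_n\}\to\{F_n\}\to\{M_n\}\to 0$ in $\bBorn_k$ with each $F_n$ free over $A_n$, assembled from finite presentations of the $M_n$ and made transition-compatible by flat base change. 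Because $\mathbf{R}\!\limpro$ has amplitude $[0,1]$, the resulting long exact sequence degenerates to an exact $\limpro_{n\in\N}^{(1)}F_n\to\limpro_{n\in\N}^{(1)}M_n\to 0$, so the desired vanishing follows immediately once $\{F_n\}$ is known to be $\limpro$-acyclic.

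The acyclicity of $\{F_n\}$ is the crux, and it is precisely here that the genuine content lies. A coherent sheaf on a quasi-Stein space generally needs unboundedly many generators as $n\to\infty$, so the $F_n$ cannot be chosen of constant finite rank; and one cannot simply pass to an infinite completed free module $A_n\wotimes E$, because such a system is not $\limpro$-acyclic merely on account of $\{A_n\}$ being so. Indeed, the two-term complex computing $\mathbf{R}\!\limpro(A_n\wotimes E)$ involves $\prod_n(A_n\wotimes E)$, and the completed tensor product does not commute with this countable product (equivalently, with the cofiltered limit $\limpro$) in $\bBorn_k$; this is exactly the failure discussed after Lemma 4.18 of \cite{Bam} and exhibited concretely by Lemma \ref{counterexample}. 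I expect this to be the main obstacle. To circumvent it I would keep each $F_n$ of finite rank $b_n$, with $b_n\to\infty$ permitted, and control the necessarily non-canonical transition matrices through the flat identifications $M_n\simeq M_{n+1}\wotimes_{A_{n+1}}A_n$, thereby reducing the acyclicity of $\{F_n\}$ to a Wengenroth-type projective-spectrum condition (in the sense of \cite{Wen03}) inherited from the one for $\{A_n\}$ that is encoded in the hypothesis.

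Equivalently, and more in the spirit of the present note, the reduction above isolates the single point in the proof of Theorem \ref{thm:main} in \cite{Bam} at which Lemma 4.18 was invoked, namely the commutation of $\wotimes$ with $\limpro$. The hypothesis of the theorem is exactly the statement of that lemma for $\cO_X$, so once it is granted every remaining step of the original argument is valid, yielding Theorems A and B for all coherent sheaves on $X$.
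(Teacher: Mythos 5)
Your closing paragraph is, in substance, the paper's entire proof. The paper disposes of this theorem in a few lines: the only incorrect step in \cite{Bam} was the invocation of Lemma 4.18 to show that $\cO_X$ has no higher cohomology; the reduction from an arbitrary coherent sheaf to the structure sheaf, which occupies the last part of the original proof, is untouched by the error; and the hypothesis $\limpro_{n}\cO_X(U_n)\simeq \mathbf{R}\!\limpro_{n}\cO_X(U_n)$ is precisely equivalent, via the \u{C}ech-theoretic identifications of Section 4.1 of \cite{Bam}, to the vanishing of the higher cohomology of $\cO_X$. So if you rest on that final paragraph --- grant the hypothesis, then rerun \cite{Bam} verbatim --- you have essentially the paper's proof, and your opening reduction ($\mathbf{R}\Gamma(X,\sF)\simeq \mathbf{R}\!\limpro_n \sF(U_n)$ of amplitude $[0,1]$, so that everything rides on $\limpro^{(1)}_{n}\sF(U_n)=0$) is a correct framing of what that argument must deliver. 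One small imprecision: the hypothesis is not ``exactly the statement of Lemma 4.18 for $\cO_X$'' --- that lemma asserts commutation of $W\wotimes_k(\minus)$ with $\mathbf{R}\!\limpro$ for nuclear systems --- but it is exactly the \emph{consequence} of that lemma which the original proof needed, which is what matters.

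The self-contained route you sketch in the middle, by contrast, has a genuine gap precisely where you flag it, and as written it should be deleted or completed. Covering $\{M_n\}$ by a system $\{F_n\}$ of finite free modules with ranks $b_n\to\infty$ and matrix transition maps, the hypothesis gives you nothing formal: $\limpro^{(1)}_{n}A_n=0$ passes to $\{A_n^{b}\}$ only for fixed finite rank with componentwise transition maps, whereas the maps assembled from compatible presentations of $\sF$ are arbitrary matrices over growing ranks. The Wengenroth-type condition established in the Proposition of this note is a property of the particular projective spectrum $\{A_n\}$, not one inherited by systems of free modules over it, so ``reducing the acyclicity of $\{F_n\}$ to a condition inherited from $\{A_n\}$'' is exactly the unproven content, not a reduction. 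In \cite{Bam}, following Kiehl's classical scheme, the coherent-to-structure-sheaf step is carried out by density and approximation arguments exploiting that each $U_n$ maps to $U_{n+1}$ with dense image --- and that part of \cite{Bam} is correct as it stands, which is why the paper can simply cite it rather than reprove it. A further small point: the theorem as stated assumes only a union of dagger affinoid subspaces, so your appeals to Weierstrass embeddings and flat transition maps import hypotheses not present in the statement (beyond what the quasi-Stein structure already provides).
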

\begin{proof}
The proof of Theorem \ref{thm:main} given in \cite{Bam} is correct under the new hypothesis. We comment more on this. Roughly speaking, the proof of Theorem \ref{thm:main} is structured as follows. The last part of the proof shows how to reduce the case of a generic coherent sheaf on $X$ to that of the structure sheaf $\cO_X$. So, it is only needed to check that $\cO_X$ does not have cohomology in higher degrees. Lemma 4.18 wrongly claimed this for the product of the open disk and the closed dagger disk. However, the hypothesis that the complex
\[ \mathbf{R}\! \limpro_{n \in \N} \cO_X(U_n) \]
has the left heart cohomology only in degree zero and is precisely isomorphic to $\limpro_{n \in \N} \cO_X(U_n)$ is equivalent to say that the \u{C}ech cohomology of $\cO_X$ vanishes in higher degrees (see Section 4.1 of \cite{Bam} for more detail on this).
\end{proof}

We conclude by remarking that there are non-trivial examples of spaces satisfying the hypothesis of Theorem \ref{thm:main_correct}. A simple family of such examples is that of the half-open annuli, \ie spaces of the form
\[ \B_{r, R}^{-, +} = \{ x \in k \,|\,  r < |x| \le R  \}, \ \, r, R \in \R_+, \ \ r < R  \] 
where on the closed part of the disk the overconvergent analytic functions are considered. Checking that the condition 
\[ \limpro_{n \in \N} \cO_X(U_n) \simeq \mathbf{R}\! \limpro_{n \in \N} \cO_X(U_n) \]
holds in this case for an exhaustion by Weierstrass subdomains is easy because the `open' and `dagger' directions are disjoint and hence the limit is trivial in the `dagger direction'. More precisely,
\[ \cO_{\B_{r, R}^{-, +}}(\B_{r, R}^{-, +}) = \l \{\left. \sum_{i \in \Z} a_i T^i\;\; \right\vert\;\; \lim_{i \in \N} |a_{-i}| \rho^{-i} = 0, \forall \rho < r,  \exists \rho > R, \lim_{i \in \N} |a_i| \rho^{i} = 0 \r \}. \]
But this is just a direct sum of a Fr\'echet space and an LB-space:
\[ \cO_{\B_{r, R}^{-, +}}(\B_{r, R}^{-, +}) = F \oplus L \]
where 
\begin{align*} F &= \l \{\left. \sum_{i \in \N} a_{-i - 1} T^{-i - 1}\;\; \right\vert \;\;\lim_{i \in \N} |a_{-i -1}| \rho^{-i-1} = 0, \forall \rho < r  \r \}\\ L &= \l \{\left. \sum_{i \in \N} a_i T^i \;\;\right\vert\;\; \exists \rho > R, \lim_{i \in \N} |a_i| \rho^{i} = 0 \r \}, \end{align*}
and
\[ F \simeq \limpro_{\rho < r} F_\rho = \limpro_{\rho < r} \l \{\left. \sum_{i \in \N} a_{-i - 1} T^{-i - 1} \;\;\right\vert \;\; \lim_{i \in \N} |a_{-i -1}| \rho^{-i-1} = 0 \r \}. \]
But it is an easy application of \cite[Lemma 3.23]{Bam} that
\[ F \cong \mathbf{R}\!\limpro_{\rho < r} F_\rho.  \]
Then,
\[ \cO_{\B_{r, R}^{-, +}}(\B_{r, R}^{-, +}) \simeq \limpro_{\rho < r} (F_\rho \oplus L) \simeq  \limpro_{\rho < r} (F_\rho) \oplus L \simeq  \mathbf{R}\! \limpro_{\rho < r} (F_\rho) \oplus L \simeq  \mathbf{R}\! \limpro_{\rho < r} (F_\rho \oplus L). \]

\end{document}